\newcommand{\norm}[1]{\|#1\|}
\newcommand{\lap}{\mathcal{L}(\mathcal{G})}
\crefname{figure}{Figure}{Figures}
\crefname{equation}{}{}
\crefname{theorem}{Theorem}{Theorems}
\theoremstyle{definition}
\newtheorem{thm}{Theorem} 
\newtheorem{lem}{Lemma}
\newtheorem{corol}{Corollary}
\newtheorem{rem}{Remark}
\newfont{\smalll}{cmr8}
\def\IR{\mathbb{R}}
\def\IS{\mathbb{S}}
\def\zero{\mathbf{0}}
\def\one{\mathbf{1}}
\def\diag{\mathrm{diag}}
\def\eye{\mathrm{I}}
\def\IS{\hbox{I\hskip-.1em S}}
\def\IC{\hbox{C\hskip-
.5em\raise.5ex\hbox{$\scriptscriptstyle\mid$}}\ }
\def\Ic{\hbox{\smalll C\hskip-
.5em\raise.3ex\hbox{$\scriptscriptstyle\mid$}}\ }
\def\T={\buildrel {\scriptscriptstyle\triangle} \over =}
\def\diag{\mathop{\rm diag}}
\def\block-diag{\mathop{\rm block{\scriptstyle -}diag}}
\def\pmbb#1{\setbox0=\hbox{#1}\raise 0.5ex\box0}
\def\norm#1{\|#1\|}
\newcommand{\mT}{^\mathrm{T}}
\newcommand{\rom}{\mathrm}
\begin{document}
    \title{\bf Exploitation of Heterogeneity in Distributed Sensing}
	
	\author{John Daniel Peterson, Tansel Yucelen, Girish Chowdhary, and Suresh Kannan
	\thanks{J. D. Peterson is a Graduate Research Assistant of the Mechanical and Aerospace Engineering Department and a member of the Advanced Systems Research Laboratory at the Missouri University of Science and Technology, Rolla, MO 65409, USA (e-mail: {\tt\small jdp6q5@mst.edu}).
	\newline\indent T. Yucelen is an Assistant Professor of the Mechanical and Aerospace Engineering Department and the Director of the Advanced Systems Research Laboratory at the Missouri University of Science and Technology, Rolla, MO 65409, USA (e-mail: {\tt\small yucelen@mst.edu}).
	\newline\indent G. Chowdhary is an Assistant Professor of the Mechanical and Aerospace Engineering Department and the Director of the Distributed Autonomous Systems Laboratory at the Oklahoma State University, Stillwater, OK 74074, USA (e-mail: {\tt\small girish.chowdhary@okstate.edu}).
	\newline\indent S. Kannan is a Chief Scientist at the NodeIn LLC., Burlington, CT 06013, USA (e-mail: {\tt\small kannan@nodein.com}).
	\newline\indent This research was supported by the University of Missouri Research Board and the Missouri Space Grant Consortium.
	}} 
\maketitle 

\newcommand{\bequ}{\begin{eqnarray}}
\newcommand{\eequ}{\end{eqnarray}}

\begin{abstract} 

Most distributed sensing methods assume that the expected value of sensed information is same for all agents -- ignoring differences in sensor capabilities due to, for example, environmental factors and sensors' quality and condition. 
In this paper, we present a distributed sensing framework to exploit heterogeneity in information provided about a dynamic environment using an active-passive networked multiagent systems approach. 
Specifically, this approach consists of agents subject to exogenous inputs (active agents) and agents without any inputs (passive agents). 
In addition, if an active agent senses a quantity accurately (resp., not accurately), then it is weighted high (resp., low) in the network such that these weights can be a function of time due to varying environmental factors. 
The key feature of our approach is that the states of all agents converge to an adjustable neighborhood of the weighted average of the sensed exogenous inputs by the active agents. 

\end{abstract}


\vspace{0.1cm}

\section{Introduction}
\label{sec:intro}


Classical distributed sensing methods assume instantaneous communication, which is not practical for situations involving a large number of agents, high-dimensional measurements, and unpredictable low-bandwidth networks \cite{spanos2005distributed,spanos2005approximate,mesbahi2010graph}. 
Unlike classical methods, system-theoretical distributed sensing approaches involve equations of motion to describe dynamic behavior of the information fusion process, which allow one to understand overall network behavior, and can also have better robustness to uncertainties (e.g., asynchronous operations, time-varying link availability, and measurement noise) \cite{olfati2004consensus,spanos2005distributed,spanos2005approximate,olfati2005consensus,kar2007distributed,kar2009distributed,yucelen2012control}. 
A key component of system-theoretical distributed sensing is a consensus algorithm needed for the information fusion process. 

Among two widely-used classes of consensus algorithms, static and dynamic consensus algorithms, dynamic ones consider agreement upon time-varying quantities and are well-suited for dynamic environment applications. 
Existing dynamic consensus algorithms are suitable for applications where each agent is subject to one input measurement \cite{spanos2005dynamic,olfati2005consensus,yang2006stability,bai2010robust,taylor2011dynamic,chen2012distributed}. 
However, {network nodes may differ in the number of input measurements}; for example, one agent may not sense a quantity and another may sense multiple quantities for certain time instants. 
While \cite{ustebay2009selective,ustebay2011efficient,mu2013efficient,Mu2014} present methods that cover applications when a portion of the agents do not perform sensing (they still assume that the rest of the agents are subject to one input measurement only),  \cite{ustebay2009selective,ustebay2011efficient} consider static consensus algorithms (i.e., not suitable for dynamic environments) and \cite{mu2013efficient,Mu2014} consider homogenous sensing capability across nodes. 

It is important to point out that sensing capability of each agent, measured by the \textit{value of information}, may not be the same 
for all agents due to variations in quality and condition of sensors and/or simply distance to the available information, and therefore, 
some agents can have better sensing power and less sensing error than others. 
Consequently, {heterogeneity in sensing capability needs to be considered to achieve reliable and correct network performance}. 
Even though there exist a few works \cite{martinez2006optimal,yucelen2012control,shge2014,olfati2012coupled,mu2013efficient,Mu2014} 
that consider the value of information, these results either deal with analysis in the context of static consensus 
algorithms (e.g., assume that location of nodes and targets remain constant) or consider mission-specific scenarios 
(e.g., sensor placement, target tracking, or information broadcasting). 


In this paper, we present a distributed sensing framework to exploit 
heterogeneity in information provided about a dynamic environment using an active-passive networked multiagent systems approach. 
This approach is introduced in \cite{ap_networks} to remove the assumption that each agent is subject to one input measurement, 
which is common among the class of dynamic consensus algorithms.
In particular, the approach in \cite{ap_networks} consists of agents subject to exogenous inputs (active agents) and agents 
without any inputs (passive agents), where these inputs may or may not overlap within the active agents. 
This paper utilizes and generalizes the active-passive networked multiagent systems approach of \cite{ap_networks} to account for
heterogeneity in agents' sensing capability measured by the value of information.
Specifically, if an active agent senses a quantity accurately (resp., not accurately), 
then it reports a high (resp., low) value of information, and hence,  it is weighted high (resp., low) in the network as compared to other agents. 
In addition, these weights can be a function of time due to varying environmental factors and/or changes in sensors' quality and condition. 
The key feature of our approach is that the states of all agents converge to an adjustable neighborhood of the weighted average of the sensed
exogenous inputs by the active agents with heterogeneous sensing capabilities. 


\section{Mathematical Preliminaries}
\label{sec:prelim}

The notation used in this paper is fairly standard (see, for example, \cite{ap_networks}). 
We first recall some of the basic notions from graph theory and refer to \cite{mesbahi2010graph,ref:001} for further details.
In the multiagent literature, graphs are broadly adopted to encode interactions in networked systems.
An \textit{undirected} graph $\mathcal{G}$ is defined by a set $\mathcal{V}_{\mathcal{G}}=\{1,\ldots,n\}$ of \textit{nodes}
and a set $\mathcal{E}_{\mathcal{G}} \subset \mathcal{V}_{\mathcal{G}} \times \mathcal{V}_{\mathcal{G}}$ of \textit{edges}.
If $(i,j) \in \mathcal{E}_{\mathcal{G}}$, then the nodes $i$ and $j$ are \textit{neighbors} and the neighboring relation is indicated with $i \sim j$.
The \textit{degree} of a node is given by the number of its neighbors.
Letting $d_i$ be the degree of node $i$, then the \textit{degree} matrix of a graph $\mathcal{G}$, $\mathcal{D}(\mathcal{G}) \in \IR^{n \times n}$, is given by $\mathcal{D}(\mathcal{G}) \triangleq \diag(d), \ d=[d_1,\ldots,d_n]\mT$.
A \textit{path} $i_0 i_1 \ldots i_L$ is a finite sequence of nodes such that $i_{k-1} \sim i_k$, $k=1, \ldots, L$,
and a graph $\mathcal{G}$ is \textit{connected} if there is a path between any pair of distinct nodes.
The \textit{adjacency} matrix of a graph $\mathcal{G}$, $\mathcal{A}(\mathcal{G}) \in \IR^{n \times n}$, is given by

\begin{equation}
   [\mathcal{A}(\mathcal{G})]_{ij} \triangleq
   \left\{ \begin{array}{cl}
      1, &\mbox{ if $(i,j)\in\mathcal{E}_\mathcal{G}$},\\
      0, &\mbox{otherwise}.
   \end{array} \right.
   \label{AdjMat}
\end{equation}
The \textit{Laplacian} matrix of a graph, $\mathcal{L}(\mathcal{G}) \in \overline{\IS}_+^{\hspace{0.1em} n \times n}$, 
playing a central role in many graph theoretic treatments of multiagent systems, is given by 
$\mathcal{L}(\mathcal{G}) \triangleq \mathcal{D}(\mathcal{G}) - \mathcal{A}(\mathcal{G})$. 
Throughout this paper, we model a given multiagent system by a connected, undirected graph $\mathcal{G}$, 
where nodes and edges represent agents and inter-agent communication links, respectively.


Next, we introduce several necessary lemmas used in the main results of this paper. 

\begin{lem}[\hspace{-0.01cm}{\cite{mesbahi2010graph}}] 
	 The spectrum of the Laplacian of a connected, undirected graph can be ordered as 
	\begin{equation}
		0 = \lambda_1(\mathcal{L}(\mathcal{G}))<\lambda_2(\mathcal{L}(\mathcal{G}))\le \cdots \le \lambda_n(\mathcal{L}(\mathcal{G})), \label{LapSpec}
	\end{equation}
	with $\one_n$ as the eigenvector corresponding to the zero eigenvalue $\lambda_1(\mathcal{L}(\mathcal{G}))$ and 
	$\mathcal{L}(\mathcal{G}) \one_n = \zero_n$ and $\rom{e}^{\mathcal{L}(\mathcal{G})\one_n} = \one_n$. 
	\label{lem:1}
\end{lem}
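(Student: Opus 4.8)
The plan is to establish the three claims in order, using only the definition $\lap=\mathcal{D}(\mathcal{G})-\mathcal{A}(\mathcal{G})$ together with standard spectral theory for real symmetric matrices. First I would verify $\lap\one_n=\zero_n$ entrywise: by \eqref{AdjMat} the $i$-th row of $\mathcal{A}(\mathcal{G})$ contains exactly $d_i$ ones, so the $i$-th component of $\lap\one_n$ is $d_i-\sum_{j=1}^{n}[\mathcal{A}(\mathcal{G})]_{ij}=d_i-d_i=0$. This already shows that $0$ is an eigenvalue of $\lap$ with $\one_n$ a corresponding eigenvector.

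Next I would record that $\lap$ is real symmetric, since both $\mathcal{D}(\mathcal{G})$ and $\mathcal{A}(\mathcal{G})$ are symmetric for an undirected graph; hence $\lap$ has a full set of real eigenvalues. Computing the associated quadratic form gives $x\mT\lap x=\tfrac{1}{2}\sum_{i,j=1}^{n}[\mathcal{A}(\mathcal{G})]_{ij}(x_i-x_j)^2\ge0$ for every $x\in\IR^n$, so $\lap$ is positive semidefinite, all its eigenvalues are nonnegative, and the smallest is $\lambda_1(\lap)=0$. Sorting the remaining eigenvalues in nondecreasing order produces $0=\lambda_1(\lap)\le\lambda_2(\lap)\le\cdots\le\lambda_n(\lap)$.

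The crux of the argument, and the step where connectedness is indispensable, is the strict inequality $\lambda_2(\lap)>0$, i.e., that the eigenvalue $0$ is simple. Here I would show the kernel of $\lap$ is one-dimensional: if $\lap x=\zero_n$, then $x\mT\lap x=0$, and the sum-of-squares expression above forces $x_i=x_j$ for every edge $(i,j)\in\mathcal{E}_{\mathcal{G}}$; since $\mathcal{G}$ is connected, any two nodes are joined by a path and propagating this equality along the path yields $x_i=x_j$ for all $i,j$, so $x\in\mathrm{span}\{\one_n\}$. Because $\lap$ is symmetric, algebraic and geometric multiplicities coincide, so $0$ has multiplicity one and $\lambda_2(\lap)>\lambda_1(\lap)=0$. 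Finally, the matrix-exponential identity is immediate from $\lap\one_n=\zero_n$: in the series $\rom{e}^{\lap}=\sum_{k\ge0}\lap^{k}/k!$ every term with $k\ge1$ annihilates $\one_n$, hence $\rom{e}^{\lap}\one_n=\one_n$.

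I expect the only nontrivial step to be the simplicity of the zero eigenvalue; everything else is a direct computation from the definitions. Minor points to be careful about are the bookkeeping in the quadratic form (the factor $\tfrac12$ and the convention on summing over ordered versus unordered neighbor pairs) and the appeal to symmetry that upgrades a one-dimensional null space to an eigenvalue of multiplicity one. Since the statement is quoted from \cite{mesbahi2010graph}, one could alternatively just cite it, but the self-contained argument above is short enough to include.
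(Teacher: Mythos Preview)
Your argument is correct and complete. Note, however, that the paper does not actually prove this lemma: it is stated with a citation to \cite{mesbahi2010graph} and no proof is given in the text. So there is no ``paper's own proof'' to compare against --- the authors simply import the result. Your self-contained derivation (row-sum computation for $\lap\one_n=\zero_n$, the quadratic-form identity for positive semidefiniteness, and the connectedness argument for simplicity of the zero eigenvalue) is the standard textbook route and is exactly what one finds in the cited reference. Including it is harmless and arguably clearer, but strictly speaking the paper treats this as a quoted fact rather than something to be proved.
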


\begin{lem}[\hspace{-0.01cm}{\cite{g:inv}}]
	The Laplacian of a connected, undirected graph satisfies $\mathcal{L}(\mathcal{G}) \mathcal{L}^\dagger(\mathcal{G})=\eye_n-\frac{1}{n}\one_n\one_n\mT$. 
\end{lem}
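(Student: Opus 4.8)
The plan is to reduce the claimed identity to a statement about orthogonal projectors and read it off from the spectral theorem. Write $L \teq \mathcal{L}(\mathcal{G})$. Since $\mathcal{G}$ is connected and undirected, $L$ is real symmetric and positive semidefinite, hence diagonalizable in an orthonormal eigenbasis $v_1,\ldots,v_n$; by Lemma~\ref{lem:1} the zero eigenvalue is simple with eigenvector $v_1=\frac{1}{\sqrt{n}}\one_n$, while $\lambda_2(L),\ldots,\lambda_n(L)$ are strictly positive. Thus $L=\sum_{i=2}^n \lambda_i(L)\,v_iv_i\mT$, and I would take as the candidate pseudoinverse $L^\dagger=\sum_{i=2}^n \lambda_i(L)\inv v_iv_i\mT$.

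First I would confirm this candidate really is the Moore--Penrose pseudoinverse by checking the four Penrose axioms; each reduces to a one-line computation using the orthonormality relations $v_i\mT v_j=\delta_{ij}$. Then, multiplying out and using orthonormality to collapse the double sum, $LL^\dagger=\sum_{i=2}^n v_iv_i\mT$. Completeness of the eigenbasis gives $\sum_{i=1}^n v_iv_i\mT=\eye_n$, and since $v_1v_1\mT=\frac{1}{n}\one_n\one_n\mT$ we conclude $LL^\dagger=\eye_n-\frac{1}{n}\one_n\one_n\mT$, as desired.

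As a consistency check I would also note the coordinate-free version of the same argument: for any real matrix $M$, the product $MM^\dagger$ is the orthogonal projector onto $\mathrm{range}(M)$, and for symmetric $M$ one has $\mathrm{range}(M)=(\ker M)^\perp$. By Lemma~\ref{lem:1}, connectedness forces $\ker L=\mathrm{span}\{\one_n\}$, so $LL^\dagger$ is the orthogonal projector onto $\{\one_n\}^\perp$, which is precisely $\eye_n-\frac{1}{n}\one_n\one_n\mT$.

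The single point deserving care is the use of connectedness: it is exactly what guarantees $\lambda_2(L)>0$, hence that $\ker L$ is one-dimensional, so that the complementary projector is the rank-one matrix $\frac{1}{n}\one_n\one_n\mT$ and not a projector onto a larger kernel. Everything else is routine linear algebra, with Lemma~\ref{lem:1} the only input beyond the definitions.
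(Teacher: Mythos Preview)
Your proof is correct. The paper does not actually prove this lemma at all: it is stated with a citation to \cite{g:inv} and no argument is given, so there is nothing to compare against. Your spectral-decomposition route (and the equivalent coordinate-free projector argument) is the standard proof of this fact and is entirely sound; the only substantive hypothesis used is the simplicity of the zero eigenvalue, which is exactly what connectedness provides via Lemma~\ref{lem:1}.
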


\vspace{-0.3cm}

\begin{lem}
	Let $K=\rom{diag}(k)$, $k=[k_1, k_2, \ldots, k_n]\mT$, $k_i \in \overline{\mathbb{R}}_+$, $i=1,\ldots,n$, and assume that at least one element of $k$ is nonzero. 
	Then, for the Laplacian of a connected, undirected graph, 
	\bequ
	\mathcal{F}(\mathcal{G})\triangleq\mathcal{L}(\mathcal{G})+K\in\IS_+^{n \times n}, \label{matrix_f}
	\eequ
	and $\rom{det}(\mathcal{F}(\mathcal{G}))\neq0$.
	\label{lem:2}
\end{lem}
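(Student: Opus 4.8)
The plan is to prove the stronger statement that $\mathcal{F}(\mathcal{G})$ is symmetric positive definite; the claim $\det(\mathcal{F}(\mathcal{G}))\neq 0$ then follows at once, since a positive definite matrix has only strictly positive eigenvalues. Symmetry is immediate: $\mathcal{L}(\mathcal{G})=\mathcal{D}(\mathcal{G})-\mathcal{A}(\mathcal{G})$ is symmetric because $\mathcal{A}(\mathcal{G})$ is symmetric for an undirected graph, and $K=\rom{diag}(k)$ is diagonal.

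First I would establish positive semidefiniteness of $\mathcal{F}(\mathcal{G})$. By Lemma~\ref{lem:1} the spectrum of $\mathcal{L}(\mathcal{G})$ is nonnegative, so $x\mT\mathcal{L}(\mathcal{G})x\ge 0$ for every $x\in\IR^n$; and $x\mT Kx=\sum_{i=1}^n k_i x_i^2\ge 0$ because each $k_i\ge 0$. Hence $x\mT\mathcal{F}(\mathcal{G})x\ge 0$ for all $x$. Second, I would exclude a nontrivial null direction. Suppose $x\mT\mathcal{F}(\mathcal{G})x=0$. Being a sum of two nonnegative terms, both must vanish, so $x\mT\mathcal{L}(\mathcal{G})x=0$ and $x\mT Kx=0$. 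The first equality together with connectedness of $\mathcal{G}$ forces $x$ into the eigenspace of the zero eigenvalue of $\mathcal{L}(\mathcal{G})$, which by Lemma~\ref{lem:1} is one-dimensional and spanned by $\one_n$; thus $x=c\one_n$ for some $c\in\IR$. (Equivalently, $x\mT\mathcal{L}(\mathcal{G})x=\tfrac12\sum_{(i,j)\in\mathcal{E}_{\mathcal{G}}}(x_i-x_j)^2=0$ gives $x_i=x_j$ along every edge, hence $x$ is constant by connectedness.) Then $x\mT Kx=c^2\sum_{i=1}^n k_i=0$, and since at least one $k_i$ is strictly positive we have $\sum_{i=1}^n k_i>0$, so $c=0$ and $x=\zero_n$. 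Therefore $x\mT\mathcal{F}(\mathcal{G})x>0$ for all nonzero $x$, i.e.\ $\mathcal{F}(\mathcal{G})\in\IS_+^{n\times n}$, and in particular $\det(\mathcal{F}(\mathcal{G}))\neq 0$.

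The argument is essentially routine, and the only step that genuinely needs care -- the main (mild) obstacle -- is the implication $x\mT\mathcal{L}(\mathcal{G})x=0\ \Rightarrow\ x\in\mathrm{span}\{\one_n\}$, which fails on disconnected graphs (there the kernel of the Laplacian has dimension equal to the number of connected components, and $\mathcal{F}(\mathcal{G})$ can be singular when $K$ vanishes identically on an entire component). I would therefore invoke connectedness explicitly at that step, either through the simplicity of $\lambda_1(\mathcal{L}(\mathcal{G}))=0$ asserted in Lemma~\ref{lem:1} or through the edge-sum form of the quadratic form above.
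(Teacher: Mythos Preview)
Your argument is correct and complete. The core idea---show that $x\mT\mathcal{F}(\mathcal{G})x=0$ forces $x\in\mathrm{span}\{\one_n\}$ via connectedness, then use the nonzero diagonal entry of $K$ to kill the remaining constant---is the same mechanism the paper relies on, but the paper packages it differently. There, $K$ is first split as $K=K_1+K_2$ with $K_1$ carrying only the single smallest nonzero diagonal entry $\phi_i$; the edge-sum form $x\mT(\mathcal{L}(\mathcal{G})+K_1)x=\sum_{i\sim j}(x_i-x_j)^2+\phi_ix_i^2$ is then used (exactly as in your parenthetical alternative) to conclude $\mathcal{L}(\mathcal{G})+K_1\in\IS_+^{n\times n}$, and finally a Weyl-type eigenvalue inequality (Fact~5.11.3 of Bernstein) is invoked to add back the positive semidefinite remainder $K_2$. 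Your route is more economical for this lemma in isolation: it handles all of $K$ at once and avoids the external matrix fact. The paper's decomposition, on the other hand, is not wasted effort---the same splitting of $K_1(t)$ into a constant lower-bound part $K_0$ plus a time-varying remainder $\tilde K(t)$ reappears later (see \cref{decom:3}) in the stability analysis, so introducing it here foreshadows that structure.
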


\begin{proof} 
Consider the decomposition $K=K_1+K_2$, where $K_1\triangleq\rom{diag}([0, \ldots, 0, \phi_i, 0, \ldots, 0]\mT)$ and $K_2\triangleq K-K_1$, where $\phi_i$ denotes the smallest nonzero diagonal element of $K$ appearing on its $i$-th diagonal, so that $K_2\in\overline{\IS}_+^{n \times n}$. 
From the Rayleigh's Quotient \cite{Lay2006}, the minimum eigenvalue of $\mathcal{L}(\mathcal{G})+K_1$ can be given by
\bequ
\lambda_{\text{min}}(\mathcal{L}(\mathcal{G})+K_1) &\hspace{-0.1cm}=\hspace{-0.1cm}& \min\limits_{x} \{x\mT\bigl(\mathcal{L}(\mathcal{G})+K_1\bigl)x \: | \: x\mT x = 1\},\nonumber\\ \label{added:equation:1}
\eequ
where $x$ is the eigenvector corresponding to this minimum eigenvalue. 
Note that since $\mathcal{L}(\mathcal{G})\in\overline{\IS}_+^{n \times n}$ and $K_1\in\overline{\IS}_+^{n \times n}$, and hence, $\mathcal{L}(\mathcal{G})+K_1$ is real and symmetric, $x$ is a real eigenvector. 
Now, expanding (\ref{added:equation:1}) as
\bequ
		 x\mT\left( \mathcal{L}(\mathcal{G}) + K_1\right)x &=& \sum\limits_{i \thicksim j}a_{ij}(x_i-x_j)^2 + \phi_i x_i^2,
		\label{eq:lem2}
\eequ
and noting that the right hand side of (\ref{eq:lem2}) is zero only if $x\equiv0$, it follows that $\lambda_{\text{min}}(\mathcal{L}(\mathcal{G}) + K_1) > 0$, and hence, $\mathcal{L}(\mathcal{G}) + K_1\in\IS_+^{n \times n}$. 
Finally, let $\lambda$ be an eigenvalue of $\mathcal{F}(\mathcal{G})=\mathcal{L}(\mathcal{G}) + K_1 + K_2$. 
Since $\lambda_\rom{min}(\mathcal{L}(\mathcal{G}) + K_1)>0$ and $\lambda_\rom{min}(K_2)=0$, it follows from Fact 5.11.3 of \cite{bernstein2009matrix} that $\lambda_\rom{min}(\mathcal{L}(\mathcal{G}) + K_1) + \lambda_\rom{min}(K_2) \le \lambda$, 
and hence, $\lambda>0$, which implies that (\ref{matrix_f}) holds and $\rom{det}(\mathcal{F}(\mathcal{G}))\neq0$. 
\end{proof}


\section{Overview of Active-Passive Networked Multiagent Systems}
\label{sec:apover}
In this section, we briefly overview the active-passive networked multiagent systems approach of \cite{ap_networks}. 
In particular, consider a system of $n$ agents exchanging information among each other using their local measurements according to a connected, undirected graph $\mathcal{G}$. 
In addition, consider that there exists $m\ge1$ exogenous inputs that interact with this system. 

\vspace{0.1cm}

\noindent \textbf{Definition 1.} If agent $i$, $i=1,\ldots,n$, is subject to one or more exogenous inputs (resp., no exogenous inputs), then it is an active agent (resp., passive agent). 

\vspace{0.1cm}

\noindent \textbf{Definition 2.} If an exogenous input interacts with only one agent (resp., multiple agents), then it is an isolated input (resp., non-isolated input).

\vspace{0.1cm}

The approach presented in \cite{ap_networks} deals with the problem of driving the states of all (active and passive) agents to the average of the applied exogenous inputs.  
For this purpose, the following integral action-based distributed sensing algorithm is proposed
\vspace{0cm}
\begin{align}
	\dot{x}_i(t) = &-\alpha\sum\limits_{i \sim j}\left(x_i(t)-x_j(t)\right)+\sum\limits_{i \sim j}\left(\xi_i(t)-\xi_j(t)\right)  \nonumber\\
	 &-\alpha\sum\limits_{i \sim h}\left(x_i(t)-c_h(t)\right), x_i(0)=x_{i0}, \label{eq:ap_orig_1}\\
	\dot{\xi}_i(t) = &-\gamma\sum\limits_{i \sim j}\left(x_i(t)-x_j(t)\right), \xi_i(0)=\xi_{i0}, \ \ \ \ \ \ \label{eq:ap_orig_2}
\end{align}
where $x_i(t)\in\IR$ and $\xi_i(t)\in\IR$ denote the state and the integral action of agent $i$, $i=1,\ldots,n$, respectively, $c_h(t)\in\IR$, $h=1,\ldots,m$, denotes an exogenous input sensed by this agent, $\alpha\in\IR_{+}$, and $\gamma \in \IR_{+}$.
Note that $i \sim h$ notation indicates the exogenous inputs that an agent is subject to, which is similar to the $i \sim j$ notation indicating the neighboring relation between agents. 

\vspace{0.1cm}

\begin{rem}
Theorem 1 of \cite{ap_networks} shows that the states of all agents converge to the average of the exogenous inputs applied to active agents under the assumption that all active agents have the same value of sensed information, and hence, are weighted identically.
\end{rem}

\vspace{0.1cm}

\begin{rem}
The proposed integral action-based distributed algorithm in (\ref{eq:ap_orig_1}) is applied to agents having dynamics of the form $\dot{x}_i(t)=u_i(t)$, where $u_i(t)\in\IR$ denotes the input of agent $i$, $i=1,\ldots,n$, satisfying the right hand side of (\ref{eq:ap_orig_1}) along with (\ref{eq:ap_orig_2}). 
For agents having complex dynamics, one can design low-level feedback controllers (or assume their existence) for suppressing existing dynamics and enforcing $\dot{x}_i(t)=u_i(t)$ (see, for example, Example 6.3 of \cite{yucelen2014consensus}). 
\end{rem}


\section{Exploitation of Heterogeneity}

The value of sensed information is not necessarily identical for all active agents due to environmental factors and/or sensors' quality and condition, as previously discussed. 
In this section, we generalize the active-passive networked multiagent systems to account for heterogeneity in active agents' sensing capability. 


\subsection{Problem Setup} 

We begin with proposing the following integral action-based distributed sensing algorithm
\begin{align}
	\dot{x}_i(t) = &-\alpha\sum\limits_{i \sim j}\left(x_i(t)-x_j(t)\right)+\sum\limits_{i \sim j}\left(\xi_i(t)-\xi_j(t)\right)  \nonumber\\
	 &-\alpha\sum\limits_{i \sim h}w_{ih}(t)\left(x_i(t)-c_{h}(t)\right), \ \ x_i(0)=x_{i0}, \label{eq:ap_weight_1}\\
	\dot{\xi}_i(t) = &-\gamma\left[\sum\limits_{i \sim j}\left(x_i(t)-x_j(t)\right) +\sigma\xi_i(t)\right], \ \ \xi_i(0)=\xi_{i0}, 
	\label{eq:ap_weight_2}
\end{align} 
where $x_i(t)\in\IR$ and $\xi_i(t)\in\IR$ denote the state and the integral action of agent $i$, $i=1,\ldots,n$, respectively, 
$c_h(t)\in\IR$, $h=1,\ldots,m$, denotes an exogenous input sensed by this agent, 
$\alpha\in\IR_{+}$, $\gamma \in \IR_{+}$, and $\sigma \in \mathbb{R}_+$. 
Note that $w_{ih}(t) \in \IR_+$ is a weight capturing the expected value of information of the exogenous input with respect to agent $i$. 

\vspace{0.1cm}

\begin{rem}
The focus of this paper is to develop a distributed sensing framework to exploit 
heterogeneity in sensed information. For this reason, we implicitly assume that the value of sensed information is already modeled and 
known by respective agents in the network. Note that each sensor can obtain the value of information, for example, through analysis of its own 
measurement using relative entropy measures such as Kullback-Liebler divergence \cite{Mu2014}. 
\end{rem}

Next, let 
\bequ
x(t)&=&\bigl[x_1(t),x_2(t),\ldots,x_n(t)\bigl]\mT\in\IR^n, \label{oldpaper:1} \\
\xi(t)&=&\bigl[\xi_1(t),\xi_2(t),\ldots,\xi_n(t)\bigl]\mT\in\IR^n, \label{oldpaper:2} \\
c(t)&=&\bigl[c_1(t),c_2(t),\ldots,c_m(t),0,\ldots,0\bigl]\in\IR^n, \label{oldpaper:3} \ \ \ \
\eequ 
where $m \le n$ is assumed to ease notation without loss of generality. 
We can now rewrite (\ref{eq:ap_weight_1}) and (\ref{eq:ap_weight_2}) in the compact form given by
\vspace{0cm}
\bequ
	\dot{x}(t) =& -\alpha\lap x(t) + \lap \xi(t)-\alpha K_1(t)x(t) \nonumber\\
	&  + \alpha K_2(t)c(t), \quad x(0)=x_0, \label{eq:ap_weight_c_1}\eequ\bequ
	\dot{\xi}(t) =& -\gamma \lap x(t) - \gamma \sigma \xi(t), \quad \xi(0)=\xi_0,  \label{eq:ap_weight_c_2}
\eequ
where $\mathcal{L}(\mathcal{G}) \in \overline{\IS}_+^{\hspace{0.1em} n \times n}$ satisfies Lemma 1, 
\bequ
	K_1(t)&\triangleq&\rom{diag}([k_{1,1}(t),\ldots,k_{1,n}(t)]\mT)\in\overline{\IS}_+^{\hspace{0.1em} n \times n}, \ \ \ \
\eequ 
with $k_{1,i}\in\overline{\IR}_+$ denoting the number of the exogenous inputs applied to agent $i$, $i=1,\ldots,n$, and
\vspace{0.1cm}
\bequ
 K_2(t)&\triangleq&\begin{bmatrix}
 k_{2,11}(t) & \cdots & k_{2,1n}(t)\\
 k_{2,21}(t) & \cdots & k_{2,2n}(t) \\
  \vdots &  \ddots &\vdots \\
  k_{2,n1}(t)&\cdots & k_{2,nn}(t)
 \end{bmatrix}\in\IR^{n \times n}, \ \ \ \
\eequ
with \vspace{-0.2cm}
\bequ
k_{1,i}(t)&=&\sum_{j=1}^{n}k_{2,ij}(t). \label{hello:world:2}
\eequ \vspace{-0.6cm}

\begin{figure}[t!] \hspace{1.1cm} \includegraphics[scale=0.5]{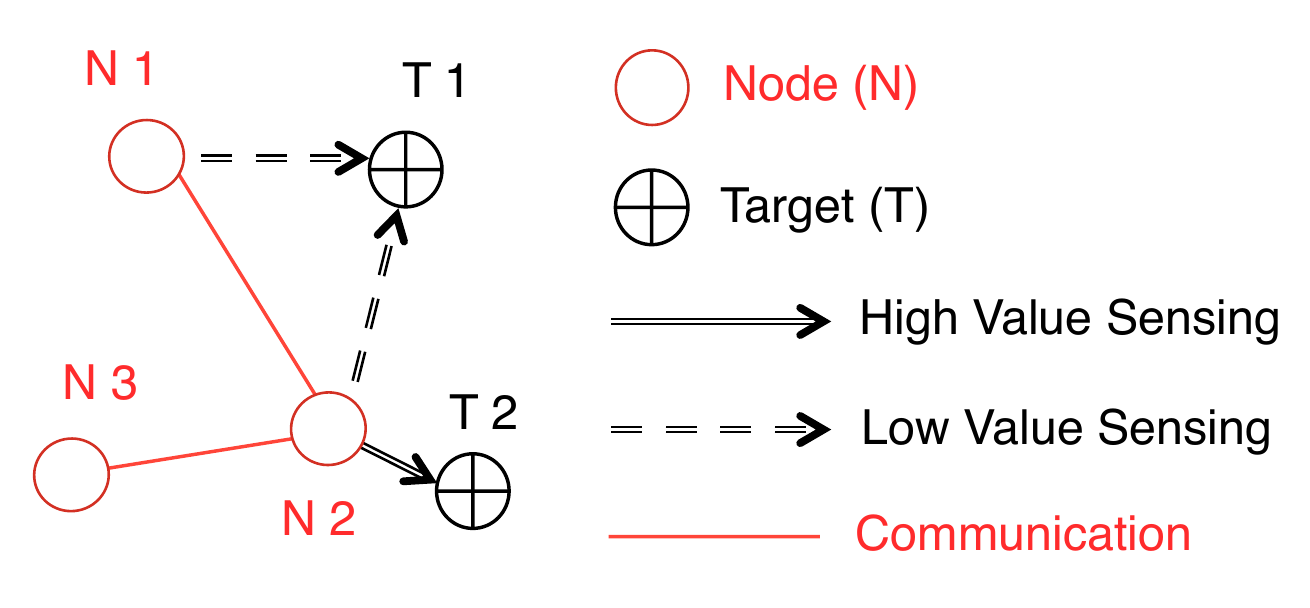} \vspace{-0.5cm}
\caption{Active-passive networked multiagent system with two targets and three agents with nodes 1 and 2 being active and node 3 being passive.} \vspace{-1cm}
\label{top:55}
\end{figure}

\begin{rem}
In (\ref{eq:ap_weight_c_1}) and (\ref{eq:ap_weight_c_2}), the elements of $K_1(t)$ and $K_2(t)$ are related to the weights $w_{ih}(t) \in \IR_+$ capturing the expected value of information. 
To elucidate this point, consider the active-passive networked multiagent system given in Figure \ref{top:55}. 
Let $c=[c_1,c_2,0]\mT$, where 
$c_1=f(q_\rom{T_1},||p_{x_1}-p_{\rom{T}_1}||_2)=f(q_\rom{T_1},||p_{x_2}-p_{\rom{T}_1}||_2)$ (assuming $||p_{x_1}-p_{\rom{T}_1}||_2=||p_{x_2}-p_{\rom{T}_1}||_2$) and $c_2=f(q_\rom{T_2},||p_{x_2}-p_{\rom{T}_2}||_2)$, 
with $q_{\rom{T}_i}$, $i=1,2$, representing the actual target quantities 
and $p_{x_i}$, $i=1,2,3$, representing the location of nodes. 
In this case, if $w_{11}(t)$, $w_{21}(t)$, and $w_{22}(t)$ respectively denote the value of information of input $c_1$ with respect to agent 1, input $c_1$ with respect to agent 2, and input $c_2$ with respect to agent 2 (note that $w_{11}(t)=w_{21}(t)$), then one can write
\bequ
K_1(t)&=&\begin{bmatrix} w_{11}(t) & 0 & 0 \\ 0 & w_{21}(t)+w_{22}(t) & 0 \\ 0 & 0 & 0 \end{bmatrix}, \\
K_2(t)&=&\begin{bmatrix} w_{11}(t) & 0 & 0 \\ w_{21}(t) & w_{22}(t) & 0 \\ 0 & 0 & 0 \end{bmatrix}.  
\eequ
As an another example, consider the case $||p_{x_1}-p_{\rom{T}_1}||_2\neq||p_{x_2}-p_{\rom{T}_1}||_2$ and let $c=[c_1,c_2,c_3]\mT$, 
where $c_1=f(q_\rom{T_1},||p_{x_1}-p_{\rom{T}_1}||_2)$, $c_2=f(q_\rom{T_1},||p_{x_2}-p_{\rom{T}_1}||_2)$, and $c_3=f(q_\rom{T_2},||p_{x_2}-p_{\rom{T}_2}||_2)$. 
In this case, if $w_{11}(t)$, $w_{22}(t)$, and $w_{23}(t)$ respectively denote the value of information of input $c_1$ with respect to agent 1, input $c_2$ with respect to agent 2, and input $c_3$ with respect to agent 2, then one can write
\bequ
K_1(t)&=&\begin{bmatrix} w_{11}(t) & 0 & 0 \\ 0 & w_{22}(t)+w_{23}(t) & 0 \\ 0 & 0 & 0 \end{bmatrix}, \eequ\bequ
K_2(t)&=&\begin{bmatrix} w_{11}(t) & 0 & 0 \\ 0 & w_{22}(t) & w_{23}(t) \\ 0 & 0 & 0 \end{bmatrix}.  
\eequ
\end{rem}

Without loss of generality, assume that $k_{2,ij}\in \left[0,1\right]$, which corresponds to the fact that each value of information can take values from the scaled interval $\left[0,1\right]$ (i.e., as agents' value of information increase about exogenous inputs, $k_{2,ij}$ increases from $0$ toward $1$). 
Since we are interested in driving the states of all (active and passive) agents to an adjustable neighborhood of the weighted average of the exogenous inputs applied to the active agents, let
\bequ
	\delta(t) &\triangleq& x(t)-\epsilon(t)\one_n \in \mathbb{R}^n, \label{eq:error_var_1} \\
	\epsilon(t) &\triangleq& \frac{\one_n^T K_2(t) c(t)}{\one_n^T K_2(t) \one_n} \in \mathbb{R}, \label{eq:error_var_2}
\eequ
be the error between $x_i(t), i=1, \ldots,n$, and the weighted average of the applied
exogenous inputs $\epsilon(t)$. 
Based on (\ref{eq:error_var_2}),  $\epsilon(t)$ can be equivalently written as
\begin{align}
	\epsilon(t)= & \bigl( k_{2,11}(t)c_{1}(t) + k_{2,12}(t)c_{2}(t) + \cdots + k_{2,21}(t)c_{1}(t) \nonumber \\ 	
	&+ k_{2,22}(t)c_{2}(t) + \cdots \bigr) / \bigl( k_{2,11}(t) + k_{2,12}(t) \nonumber \\ 	
	&+ \cdots + k_{2,21}(t) + k_{2,22}(t) + \cdots \bigr). \label{eq:error_var_3}
\end{align}
Note that the denominator of (\ref{eq:error_var_3}) is nonzero, since we assume that there exists $m\ge1$ exogenous inputs, and hence, there exists at least one nonzero value of information weight. 
Furthermore, let this nonzero weight be lower bounded by $\phi_i \in \IR_+$ and consider the decomposition 
\bequ
K_1(t)=K_0+\tilde{K}(t), \label{decom:3}
\eequ 
where $K_0\triangleq\rom{diag}([0, \ldots, 0, \phi_i, 0, \ldots, 0]\mT)$ and $\tilde{K}(t)\triangleq K_1(t)-K_0$ such that $\tilde{K}\in\overline{\IS}_+^{n \times n}$. 
Note that for situations where there exists more than one nonzero value of information weights, then we can either include the lower bounds of these nonzero weights to $K_0$ or we simply let $\phi_i$ to represent the smallest lower bound of these nonzero weights, and hence, we can  perform this decomposition. 
This concludes the setup of our problem. 
Next, we present the stability and performance guarantees of the distributed sensing algorithm given by (\ref{eq:ap_weight_1}) and (\ref{eq:ap_weight_2}). 


\subsection{Stability and Performance Guarantees} 

Consider the error between $x_i(t)$, $i=1,\ldots,n$, and the weighted average of the sensed exogenous inputs $\epsilon(t)$ given by (\ref{eq:error_var_1}). 
Using Lemma 1, the time derivative of (\ref{eq:error_var_1}) can be given by
\bequ
	\dot{\delta}(t)&=&-\alpha \tilde{F}(\mathcal{G},t)\delta(t)+\mathcal{L}(\mathcal{G})\xi(t)-\alpha L_c(t) K_2(t) c(t)\nonumber\\
	&& -\dot{\epsilon}(t)\one_n, \quad \delta(0)=\delta_0, \label{tansel:01}
\eequ
where 
$\tilde{F}(\mathcal{G},t) \triangleq \mathcal{L}(\mathcal{G}) + K_1(t),$ and 
$L_c(t) \triangleq \frac{K_1(t) \one_n \one_n\mT}{\one_n\mT K_2(t) \one_n} - \eye_n$.
Now consider
\bequ
	e(t) &\triangleq& \xi(t)-\alpha\mathcal{L}^\dagger (\mathcal{G}) L_c(t) K_2(t) c(t). \label{tansel:04}
\eequ
Using Lemma 2 and noting that $\one_n\mT L_c(t) = \one_n\mT\big[{K_1(t) \one_n \one_n\mT}/({\one_n\mT K_2(t) \one_n}) - \eye_n\bigl]  = 0$, (\ref{tansel:01}) can be rewritten by
\bequ
	\dot{\delta}(t)&=&-\alpha \tilde{F}(\mathcal{G},t)\delta(t)+\mathcal{L}(\mathcal{G})e(t)-\dot{\epsilon}(t)\one_n.  \label{tansel:06}
\eequ
Furthermore, applying the decomposition (\ref{decom:3}) to (\ref{tansel:06}), it follows that
\bequ
	\dot{\delta}(t)&\hspace{-0.1cm}=\hspace{-0.1cm}&-\alpha {F}(\mathcal{G})\delta(t)-\alpha \tilde{K}(t)\delta(t)+\mathcal{L}(\mathcal{G})e(t)-\dot{\epsilon}(t)\one_n, \nonumber\\ \label{tansel:07}
\eequ
where ${F}(\mathcal{G}) \triangleq \mathcal{L}(\mathcal{G}) +K_0 \in \IS_+^{n \times n}$ is a direct consequence of Lemma 3. 
Finally, the time derivative of (\ref{tansel:04}) can be given by
\bequ
	\dot{e}(t)&\hspace{-0.1cm}=\hspace{-0.1cm}&-\gamma  \mathcal{L}(\mathcal{G}) \delta(t) - \gamma \sigma e(t) -\alpha\gamma\sigma \mathcal{L}^\dagger(\mathcal{G})K_c(t)c(t) \nonumber\\ &&- \alpha\mathcal{L}^\dagger(\mathcal{G})\dot{K}_c(t)c(t)- \alpha\mathcal{L}^\dagger(\mathcal{G})K_c(t) \dot{c}(t), \nonumber\\  && \hspace{4.325cm} e(0)=e_0,  \label{tansel:09}
\eequ
where $K_c(t) \triangleq L_c(t) K_2(t)$. 

Next, the closed-loop error dynamics given by (\ref{tansel:07}) and (\ref{tansel:09}) can be rewritten as
\begin{align}
	\dot{\delta}(t)=&-\alpha {F}(\mathcal{G})\delta(t)-\alpha \tilde{K}(t)\delta(t)+\mathcal{L}(\mathcal{G})e(t)+s_1(t), \nonumber\\ \label{closed:01} \\
	\dot{e}(t)=&-\gamma  \mathcal{L}(\mathcal{G}) \delta(t) - \gamma \sigma e(t) + s_2(t), \label{closed:02}
\end{align}
where the perturbation terms are given by 
\bequ
s_1(t)&\triangleq&-\dot{\epsilon}(t)\one_n, \\
s_2(t)&\triangleq&-\alpha\gamma\sigma \mathcal{L}^\dagger(\mathcal{G})K_c(t)c(t) - \alpha\mathcal{L}^\dagger(\mathcal{G})\dot{K}_c(t)c(t) \nonumber\\&& - \alpha\mathcal{L}^\dagger(\mathcal{G})K_c(t) \dot{c}(t). 
\eequ
For the following results, we assume $||s_1(t)||_2 \le s_1^*$ and $||s_2(t)||_2 \le s_2^*$.

\vspace{0.1cm}

\begin{thm}
	Consider the networked multiagent system given by (\ref{eq:ap_weight_1}) and (\ref{eq:ap_weight_2}), where agents exchange 
	information using local measurements through a connected and undirected graph topology. 
	Then, the closed-loop error dynamics given by (\ref{closed:01}) and (\ref{closed:02}) are bounded. 
	\label{thm:main}
\end{thm}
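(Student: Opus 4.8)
The plan is to construct a Lyapunov function for the nominal (unperturbed) part of the cascade \eqref{closed:01}--\eqref{closed:02} and then invoke an input-to-state stability argument to conclude boundedness in the presence of the bounded perturbations $s_1(t)$ and $s_2(t)$. First I would observe that the nominal system $\dot\delta = -\alpha F(\mathcal{G})\delta - \alpha\tilde K(t)\delta + \mathcal{L}(\mathcal{G})e$, $\dot e = -\gamma\mathcal{L}(\mathcal{G})\delta - \gamma\sigma e$ has a useful structure: $F(\mathcal{G})\in\IS_+^{n\times n}$ is positive definite by Lemma~\ref{lem:2}, $\tilde K(t)\in\overline{\IS}_+^{n\times n}$ only helps, and the coupling blocks $\mathcal{L}(\mathcal{G})$ and $-\gamma\mathcal{L}(\mathcal{G})$ are skew-symmetric up to the scaling $\gamma$. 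This suggests the weighted quadratic candidate
\begin{align}
V(\delta,e) = \tfrac{1}{2}\gamma\,\delta\mT\delta + \tfrac{1}{2}\,e\mT e, \label{prop:lyap}
\end{align}
whose derivative along the nominal flow is $\dot V = -\alpha\gamma\,\delta\mT F(\mathcal{G})\delta - \alpha\gamma\,\delta\mT\tilde K(t)\delta - \gamma\sigma\,e\mT e$, because the cross terms $\gamma\,\delta\mT\mathcal{L}(\mathcal{G})e$ and $-\gamma\,e\mT\mathcal{L}(\mathcal{G})\delta$ cancel exactly. Using $\delta\mT F(\mathcal{G})\delta \ge \lambda_{\min}(F(\mathcal{G}))\,\norm{\delta}_2^2$ with $\lambda_{\min}(F(\mathcal{G}))>0$ and dropping the nonnegative $\tilde K(t)$ term, this gives $\dot V \le -\alpha\gamma\lambda_{\min}(F(\mathcal{G}))\norm{\delta}_2^2 - \gamma\sigma\norm{e}_2^2$, which is negative definite in $(\delta,e)$; the presence of the $\sigma\xi$ damping term introduced in \eqref{eq:ap_weight_2} is exactly what makes the $e$-subsystem contribute negativity rather than being merely marginally stable.

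Next I would reinstate the perturbations. With \eqref{closed:01}--\eqref{closed:02} the derivative of $V$ picks up the extra term $\gamma\,\delta\mT s_1(t) + e\mT s_2(t)$, bounded by $\gamma\norm{\delta}_2 s_1^* + \norm{e}_2 s_2^*$. Combining with the negative-definite bound and completing the square (e.g., $\gamma\norm{\delta}_2 s_1^* \le \tfrac{1}{2}\alpha\gamma\lambda_{\min}(F(\mathcal{G}))\norm{\delta}_2^2 + \tfrac{\gamma (s_1^*)^2}{2\alpha\lambda_{\min}(F(\mathcal{G}))}$, and similarly for the $e$ term), one obtains $\dot V \le -c_1 V + c_2$ for positive constants $c_1,c_2$ depending on $\alpha,\gamma,\sigma,\lambda_{\min}(F(\mathcal{G})),s_1^*,s_2^*$. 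A standard comparison-lemma argument then shows $V(\delta(t),e(t))$ is bounded for all $t\ge 0$, hence $\delta(t)$ and $e(t)$ are bounded; since $x(t)=\delta(t)+\epsilon(t)\one_n$ and $\xi(t)=e(t)+\alpha\mathcal{L}^\dagger(\mathcal{G})L_c(t)K_2(t)c(t)$ with $\epsilon(t)$, $L_c(t)$, $K_2(t)$, $c(t)$ all bounded (the $k_{2,ij}\in[0,1]$ normalization and the assumed boundedness of $c(t)$), boundedness of the original closed-loop signals follows.

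The main obstacle I anticipate is not the Lyapunov inequality itself but making rigorous the standing assumption that $\norm{s_1(t)}_2\le s_1^*$ and $\norm{s_2(t)}_2\le s_2^*$ — i.e., that the perturbations really are bounded. This requires $\dot\epsilon(t)$, $\dot K_c(t)$, $\dot c(t)$, and $K_c(t)c(t)$ to be bounded, which in turn needs mild regularity hypotheses on the exogenous inputs $c(t)$ and on the time-varying weights $w_{ih}(t)$ (bounded and with bounded derivatives), together with the fact that the denominator $\one_n\mT K_2(t)\one_n$ is bounded away from zero by $\phi_i>0$ so that $L_c(t)$ and its derivative stay bounded. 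In the write-up this is exactly why the decomposition \eqref{decom:3} and the lower bound $\phi_i$ were introduced, so I would lean on those; the only genuinely delicate point is differentiating the ratio defining $\epsilon(t)$ and $L_c(t)$ and bounding the result, which is routine once the denominator's positive lower bound is in hand. A secondary, purely cosmetic care-point is the choice of the relative scaling $\gamma$ between the two quadratic terms in \eqref{prop:lyap}, needed to kill the cross terms; if one prefers a symmetric Lyapunov function one can instead scale the $e$-equation, but the weighted form above is the cleanest.
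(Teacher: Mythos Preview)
Your proposal is correct and matches the paper's approach: the paper's proof consists solely of naming the Lyapunov candidate $V(\delta,e)=\tfrac{1}{2\alpha}\delta\mT\delta+\tfrac{1}{2\alpha\gamma}e\mT e$, which is your weighted quadratic up to the positive scalar $\alpha\gamma$, and stating that the result follows by differentiating along the closed-loop error dynamics. You have simply supplied the details---the cross-term cancellation via the symmetry of $\mathcal{L}(\mathcal{G})$, the completion of the square against the bounded perturbations $s_1,s_2$, and the comparison-lemma conclusion---that the paper leaves implicit.
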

\begin{proof}
	The result follows by considering the Lyapunov function candidate given by  
	\begin{equation}
		V(\delta,e)=\frac{1}{2\alpha}\delta\mT\delta+\frac{1}{2\alpha\gamma}e\mT e, \label{prfx:1hey} 
	\end{equation}
    and differentiating it along the closed loop error dynamics given by \cref{closed:01} and \cref{closed:02}.
\end{proof}
\begin{rem}
	From \Cref{thm:main}, one can calculate an estimate of the ultimate bound for $\delta(t)$, $t \ge T$ as
	\begin{align}
		\norm{\delta(t)}^2_{2}\le& \frac{1}{\alpha^2}\biggl[\frac{n^2 \dot{\epsilon}^{*2}}{\lambda_\rom{min}^2(\mathcal{F}(\mathcal{G}))}\bigg] + \frac{\alpha^2}{\gamma}\biggl[  p_1^{*2}  \nonumber\\ 
		& +\frac{2 p_1^* p_2^*}{\gamma\sigma} + \frac{p_2^{*2}}{\gamma^2 \sigma^2} \bigg], \ \ \ \ \  \label{BOUND:GIANT}
	\end{align}
	where $||\dot{\epsilon}(t)||_2\le\dot{\epsilon}^*$, $||\mathcal{L}^\dagger(\mathcal{G})K_c(t)c(t)||_2\le p_1^*$, 
	and $|| \mathcal{L}^\dagger(\mathcal{G})\dot{K}_c(t)c(t) +\mathcal{L}^\dagger(\mathcal{G})K_c(t) \dot{c}(t) ||_2\le p_2^*$.
	This implies that if we choose $\alpha$ and $\gamma$ such that both $1/\alpha^2$ and $\alpha^2/\gamma$ are small,
	then \cref{BOUND:GIANT} is small for $t \ge T$. 
\end{rem}

\subsection{Special Case Corollaries} 

In the previous section, it is shown that the states of all agents can be driven to an adjustable neighborhood 
of the weighted average of the sensed exogenous inputs by the active agents for the case of time-varying sensed 
exogenous inputs, which is due to a dynamic environment, with time-varying value of information. We now give several
corollaries that present special cases of \Cref{thm:main}.
We begin with the case of time-varying sensed exogenous inputs and constant value of information. 

\vspace{0.1cm}

\begin{corol}
	Consider the networked multiagent system given by (\ref{eq:ap_weight_1}) and (\ref{eq:ap_weight_2}), where agents exchange information using local measurements through a connected and undirected graph topology. 
	Let the value of information be constant. 
	Then, the closed-loop error dynamics given by (\ref{closed:01}) and (\ref{closed:02}) are bounded and the bound of $\delta(t)$ for $t \ge T$ is given by (\ref{BOUND:GIANT}) with $\dot{\epsilon}^* = \dot{c}^*||\one_n\mT K_2||_2 / (\one_n\mT K_2 \one_n)$, $p_1^* =  c^*||\mathcal{L}^\dagger(\mathcal{G})K_c||_\rom{F}$, and $p_2^* = \dot{c}^*||\mathcal{L}^\dagger(\mathcal{G})K_c||_\rom{F}$, 
	where $||c(t)||_2\le c^*$ and $||\dot{c}(t)||_2 \le \dot{c}^*$. 
\end{corol}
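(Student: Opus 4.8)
The plan is to specialize the perturbation-term bounds appearing in the proof of \Cref{thm:main} to the case of a constant value of information and then read off the ultimate-bound estimate \cref{BOUND:GIANT}. First I would note that a constant value of information means that the weights $w_{ih}$, and therefore the matrices $K_1$, $K_2$, $L_c$, and $K_c = L_c K_2$, are all time-invariant; in particular $\dot{K}_c(t)$ vanishes identically, which kills the middle term of $s_2(t)$ and leaves $s_2(t) = -\alpha\gamma\sigma\,\mathcal{L}^\dagger(\mathcal{G})K_c c(t) - \alpha\,\mathcal{L}^\dagger(\mathcal{G})K_c \dot{c}(t)$.

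Next I would bound the two perturbation terms. Since $K_2$ is constant, differentiating \cref{eq:error_var_2} gives $\dot{\epsilon}(t) = \one_n\mT K_2 \dot{c}(t) / (\one_n\mT K_2 \one_n)$, where the denominator is nonzero by the remark following \cref{eq:error_var_3}; the Cauchy--Schwarz inequality together with $\norm{\dot{c}(t)}_2 \le \dot{c}^*$ then yields $|\dot{\epsilon}(t)| \le \dot{c}^* \norm{\one_n\mT K_2}_2 / (\one_n\mT K_2 \one_n) \teq \dot{\epsilon}^*$, so that $s_1(t) = -\dot{\epsilon}(t)\one_n$ satisfies $\norm{s_1(t)}_2 = \sqrt{n}\,|\dot{\epsilon}(t)|$ and is uniformly bounded. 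For $s_2(t)$, submultiplicativity of the induced and Frobenius norms, $\norm{c(t)}_2 \le c^*$, and $\norm{\dot{c}(t)}_2 \le \dot{c}^*$ give $\norm{\mathcal{L}^\dagger(\mathcal{G})K_c c(t)}_2 \le c^* \norm{\mathcal{L}^\dagger(\mathcal{G})K_c}_\rom{F} \teq p_1^*$ and, using $\dot{K}_c \equiv 0$, $\norm{\mathcal{L}^\dagger(\mathcal{G})\dot{K}_c c(t) + \mathcal{L}^\dagger(\mathcal{G})K_c \dot{c}(t)}_2 = \norm{\mathcal{L}^\dagger(\mathcal{G})K_c \dot{c}(t)}_2 \le \dot{c}^* \norm{\mathcal{L}^\dagger(\mathcal{G})K_c}_\rom{F} \teq p_2^*$. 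Hence the standing assumptions $\norm{s_1(t)}_2 \le s_1^*$ and $\norm{s_2(t)}_2 \le s_2^*$ that precede \Cref{thm:main} hold automatically under the present hypotheses.

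With these identifications the argument is finished by direct appeal to \Cref{thm:main}: the Lyapunov function \cref{prfx:1hey} certifies boundedness of the closed-loop error dynamics \cref{closed:01} and \cref{closed:02}, and the estimate \cref{BOUND:GIANT} holds with $\dot{\epsilon}^*$, $p_1^*$, and $p_2^*$ as computed above, which is exactly the claimed bound.

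The only real obstacle is bookkeeping rather than analysis: one must confirm that $\dot{K}_c \equiv 0$ is the sole simplification the constancy hypothesis provides — the terms proportional to $\sigma$ and to $\dot{c}(t)$ both survive because the environment is still dynamic, so $\dot{c}(t)$ need not vanish — and then check that each surviving term is matched to the correct placeholder in \cref{BOUND:GIANT}. It is worth stating explicitly that dropping the $p_2^*$ contribution would require the further (and stronger) assumption of constant exogenous inputs, not merely a constant value of information.
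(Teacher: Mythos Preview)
Your proposal is correct and matches the paper's approach: the paper states Corollary~1 without proof, presenting it purely as a specialization of \Cref{thm:main} and the bound \cref{BOUND:GIANT}, and your argument supplies exactly the intended details---namely that constancy of the value of information forces $\dot{K}_c\equiv 0$, after which the stated expressions for $\dot{\epsilon}^*$, $p_1^*$, and $p_2^*$ follow by differentiating \cref{eq:error_var_2} and applying submultiplicativity.
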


The next corollary presents the case of constant sensed exogenous inputs and time-varying value of information. 

\begin{corol}
	Consider the networked multiagent system given by (\ref{eq:ap_weight_1}) and (\ref{eq:ap_weight_2}), where agents exchange information using local measurements through a connected and undirected graph topology. 
	Let the sensed exogenous inputs be constant. 
	Then, the closed-loop error dynamics given by (\ref{closed:01}) and (\ref{closed:02}) are bounded and the bound of $\delta(t)$ for $t \ge T$ is given by (\ref{BOUND:GIANT}) with $\dot{\epsilon}^* \ge || {\one_n\mT K_2(t) \bigl[\one_n \one_n\mT \dot{K}_2(t) c - c \one_n\mT \dot{K}_2(t) \one_n\bigl]}/{(\one_n\mT K_2(t) \one_n)^2}||_2$, $p_1^* =  ||\mathcal{L}^\dagger(\mathcal{G})||_\rom{F} ||c||_2 k_c^*$, and $p_2^* = ||\mathcal{L}^\dagger(\mathcal{G})||_\rom{F} ||c||_2 \dot{k}_c^*$, 
	where $||K_2(t)||_\rom{F}\le k_c^*$ and $||\dot{K}_2(t)||_\rom{F}\le \dot{k}_c^*$. 
\end{corol}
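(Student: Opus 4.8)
The plan is to read this corollary off \Cref{thm:main} together with the ultimate-bound estimate \cref{BOUND:GIANT} in the remark following it. Boundedness of \cref{closed:01}--\cref{closed:02} is immediate: ``constant sensed exogenous inputs'' means $\dot c(t)\equiv\zero$, which is a special instance of the setting of \Cref{thm:main}, so the same Lyapunov candidate \cref{prfx:1hey} and the same derivative computation apply verbatim and already deliver both boundedness and the bound \cref{BOUND:GIANT} for $t\ge T$. Thus the only work left is to produce admissible constants, i.e.\ uniform-in-time upper bounds $\dot\epsilon^*\ge\norm{\dot\epsilon(t)}_2$, $p_1^*\ge\norm{\mathcal L^\dagger(\mathcal G)K_c(t)c(t)}_2$, and $p_2^*\ge\norm{\mathcal L^\dagger(\mathcal G)\dot K_c(t)c(t)+\mathcal L^\dagger(\mathcal G)K_c(t)\dot c(t)}_2$ that are consistent with $c(t)\equiv c$, and to substitute them into \cref{BOUND:GIANT}.

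For $\dot\epsilon^*$ I would treat \cref{eq:error_var_2} as the quotient of the two scalars $N(t)\triangleq\one_n\mT K_2(t)c$ and $D(t)\triangleq\one_n\mT K_2(t)\one_n$ and differentiate: $\dot\epsilon(t)=\bigl(\dot N(t)D(t)-N(t)\dot D(t)\bigr)/D(t)^2$ with $\dot N(t)=\one_n\mT\dot K_2(t)c$ and $\dot D(t)=\one_n\mT\dot K_2(t)\one_n$. Re-collecting the numerator as $\one_n\mT K_2(t)\bigl[\one_n\one_n\mT\dot K_2(t)c-c\,\one_n\mT\dot K_2(t)\one_n\bigr]$, which is a straightforward regrouping of scalar products, and observing that the $2$-norm of a scalar is its absolute value, gives precisely the $\dot\epsilon^*$ stated in the corollary. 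Its finiteness rests on $D(t)$ being bounded away from zero, which is exactly the standing assumption that at least one value-of-information weight is nonzero and lower bounded by $\phi_i$, as encoded in the decomposition \cref{decom:3}.

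For $p_1^*$ and $p_2^*$, note first that $\dot c(t)\equiv\zero$ annihilates the term $\mathcal L^\dagger(\mathcal G)K_c(t)\dot c(t)$, so $p_2^*$ only has to dominate $\norm{\mathcal L^\dagger(\mathcal G)\dot K_c(t)c}_2$. I would then bound both quantities by submultiplicativity, $\norm{\mathcal L^\dagger(\mathcal G)K_c(t)c}_2\le\norm{\mathcal L^\dagger(\mathcal G)}_\rom{F}\norm{c}_2\sup_t\norm{K_c(t)}_\rom{F}$ and $\norm{\mathcal L^\dagger(\mathcal G)\dot K_c(t)c}_2\le\norm{\mathcal L^\dagger(\mathcal G)}_\rom{F}\norm{c}_2\sup_t\norm{\dot K_c(t)}_\rom{F}$, and then use $K_c(t)=L_c(t)K_2(t)$ and $\dot K_c(t)=\dot L_c(t)K_2(t)+L_c(t)\dot K_2(t)$ to express the suprema through $k_c^*\ge\norm{K_2(t)}_\rom{F}$ and $\dot k_c^*\ge\norm{\dot K_2(t)}_\rom{F}$; here the contributions of $L_c(t)$ and $\dot L_c(t)$ are themselves uniformly bounded, since $L_c(t)$ is a fixed continuous function of the data $K_2(t)$ whose entries lie in $[0,1]$ and whose denominator $\one_n\mT K_2(t)\one_n$ is bounded below as in the previous paragraph. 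Plugging the three constants into \cref{BOUND:GIANT} completes the argument.

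The main obstacle is the bookkeeping around $L_c(t)$: one must carefully verify that $\one_n\mT K_2(t)\one_n$ stays bounded away from zero so that $L_c(t)$, $\dot L_c(t)$, $K_c(t)$, $\dot K_c(t)$ and the quotient-rule derivative of $\epsilon(t)$ are all well defined and uniformly bounded in $t$. Everything past that point is routine regrouping of scalar/matrix products and standard norm inequalities; at most a cosmetic effort is needed to match the exact constants quoted in the statement, the $L_c(t)$-dependent factors being folded into $k_c^*$ and $\dot k_c^*$.
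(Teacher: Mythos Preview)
Your proposal is correct and matches the paper's approach: the corollary is stated without proof and is obtained exactly as you describe, by specializing \Cref{thm:main} and the ultimate-bound estimate \cref{BOUND:GIANT} to the case $\dot c(t)\equiv\zero$, then reading off explicit constants via the quotient-rule derivative of \cref{eq:error_var_2} and submultiplicativity of the Frobenius norm. Your observation that the $L_c(t)$-dependent factors must be absorbed into $k_c^*$ and $\dot k_c^*$ to match the paper's constants exactly is well taken; the paper is tacit on this point.
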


The bounds included in Corollaries 1 and 2 clearly show how the time rate of change of the exogenous inputs and the value of information affect (\ref{BOUND:GIANT}), respectively. 
We now present the final case of constant sensed exogenous inputs and constant value of information. 

\vspace{0.1cm}

\begin{corol}
	Consider the networked multiagent system given by (\ref{eq:ap_weight_1}) and (\ref{eq:ap_weight_2}) with $\sigma=0$, where agents exchange information 
	using local measurements through a connected and undirected graph topology. 
	Let both the value of information and the sensed exogenous inputs be constant. 
	Then, the closed-loop error dynamics given by (\ref{closed:01}) and (\ref{closed:02}) are Lyapunov stable for all initial conditions and $\delta(t)$ asymptotically vanishes.
\end{corol}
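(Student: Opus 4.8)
The plan is to show that under the hypotheses of the corollary the closed‑loop error dynamics \eqref{closed:01}–\eqref{closed:02} degenerate to an \emph{autonomous} linear system, to re‑use the Lyapunov function of Theorem~\ref{thm:main}, and then to pin down $\delta(t)\to0$ by an invariance argument. First I would check that the perturbations vanish identically: a constant value of information makes $K_2$ — and hence $K_1$, $K_0$, $\tilde K$, $L_c$, and $K_c=L_c K_2$ — constant; constant sensed inputs make $c$, and therefore $\epsilon(t)$ in \eqref{eq:error_var_2}, constant, so $\dot\epsilon(t)=0$ and $s_1(t)\equiv 0$; and with $\sigma=0$, $\dot K_c=0$, $\dot c=0$ the term $s_2(t)$ in \eqref{closed:02} also vanishes. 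Thus \eqref{closed:01}–\eqref{closed:02} reduce to $\dot\delta=-\alpha\tilde F(\mathcal{G})\delta+\mathcal{L}(\mathcal{G})e$ and $\dot e=-\gamma\mathcal{L}(\mathcal{G})\delta$, where $\tilde F(\mathcal{G})=F(\mathcal{G})+\tilde K=\mathcal{L}(\mathcal{G})+K_1$.

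Next I would differentiate the candidate $V(\delta,e)$ of \eqref{prfx:1hey} along this reduced system. The cross terms $\tfrac1\alpha\delta\mT\mathcal{L}(\mathcal{G})e$ and $-\tfrac1\alpha e\mT\mathcal{L}(\mathcal{G})\delta$ cancel by symmetry of $\mathcal{L}(\mathcal{G})$, leaving $\dot V=-\delta\mT\tilde F(\mathcal{G})\delta$. Since there is at least one exogenous input with nonzero value of information, $K_1$ has at least one nonzero diagonal entry, so Lemma~3 gives $\tilde F(\mathcal{G})=\mathcal{L}(\mathcal{G})+K_1\in\IS_+^{n \times n}$; hence $\dot V\le0$, with equality iff $\delta=0$. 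Consequently $V$ is nonincreasing, $(\delta(t),e(t))$ is bounded, $V$ is radially unbounded so its sublevel sets are compact and positively invariant, and Lyapunov stability of the equilibrium set holds for all initial conditions.

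Finally, because the reduced system is autonomous I would invoke LaSalle's invariance principle: every trajectory converges to the largest invariant set contained in $\{\dot V=0\}=\{(\delta,e):\delta=0\}$. On that set $\dot\delta=0$ forces $\mathcal{L}(\mathcal{G})e=0$, hence $e\in\mathrm{span}(\one_n)$ by connectedness of $\mathcal{G}$ (Lemma~1), and $\dot e=-\gamma\mathcal{L}(\mathcal{G})\delta=0$, so the invariant set is $\{(0,\beta\one_n):\beta\in\IR\}$; in particular $\delta(t)\to0$. A Barbalat‑type finish works equally well: $\ddot V$ is bounded along the bounded trajectory, so $\dot V\to0$, i.e. $\delta\mT\tilde F(\mathcal{G})\delta\to0$, which forces $\delta(t)\to0$ by positive definiteness of $\tilde F(\mathcal{G})$.

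The step needing the most care is precisely this last one: setting $\sigma=0$ removes the $-\gamma\sigma e$ damping from the $e$‑channel, so $\dot V$ is only negative \emph{semi}definite and the full state $(\delta,e)$ is not asymptotically stable — $e$ merely settles onto $\mathrm{span}(\one_n)$, which is consistent with the corollary claiming only that $\delta$ vanishes. One must therefore resist concluding asymptotic stability directly from the quadratic $V$ and instead extract $\delta(t)\to0$ via the invariance (or Barbalat) argument; once $s_1\equiv s_2\equiv0$ is established and Lemma~3 is applied, the remaining computations are routine.
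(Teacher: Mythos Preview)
Your proposal is correct and follows exactly the line the paper intends: the paper states Corollary~3 without proof as a special case of Theorem~\ref{thm:main}, whose proof sketch uses precisely the Lyapunov function \eqref{prfx:1hey} you employ. Your verification that $s_1\equiv s_2\equiv0$ under the hypotheses, the computation $\dot V=-\delta\mT\tilde F(\mathcal{G})\delta\le0$ via Lemma~3, and the LaSalle/Barbalat step to extract $\delta(t)\to0$ from the merely negative semidefinite $\dot V$ are all sound and fill in the details the paper leaves implicit.
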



\section{Illustrative Numerical Examples} \label{sec:apweights_observe}

\begin{figure}[t!] \hspace{1.1cm} \includegraphics[scale=0.5]{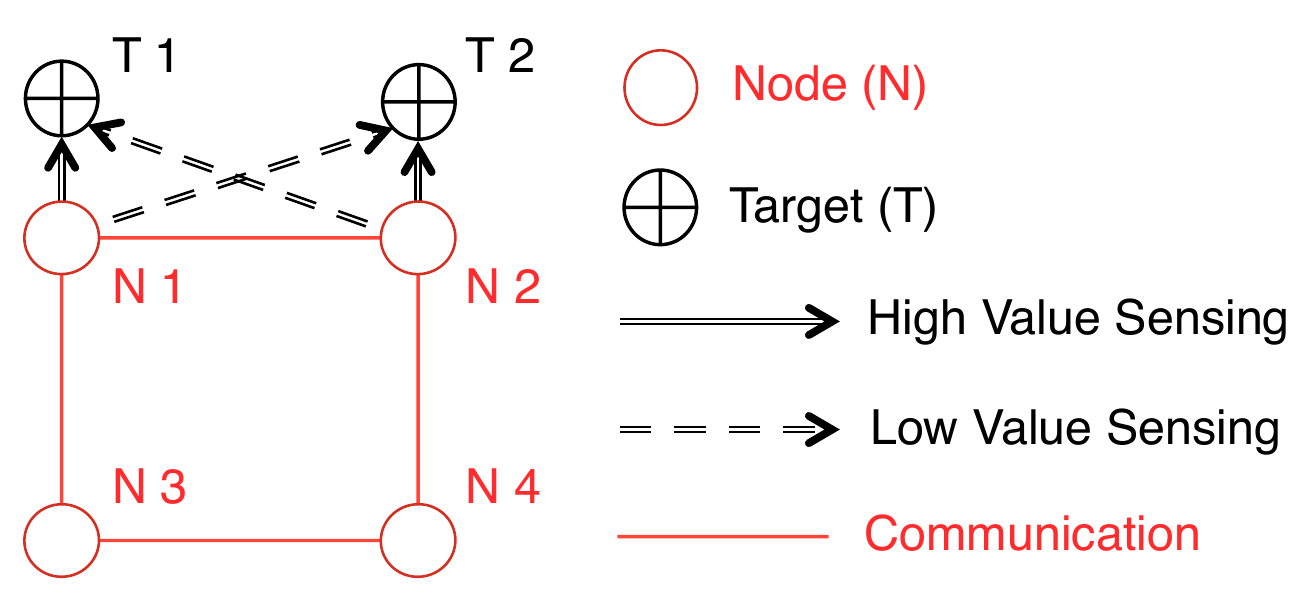} \vspace{-0.4cm}
\caption{Active-passive networked multiagent system with two targets and four agents with nodes 1 and 2 being active and nodes 3 and 4 being passive.} \vspace{-0.325cm}
\label{illustrative:01}
\end{figure}

\begin{figure}[t!] \hspace{-0.1cm} \includegraphics[scale=0.486]{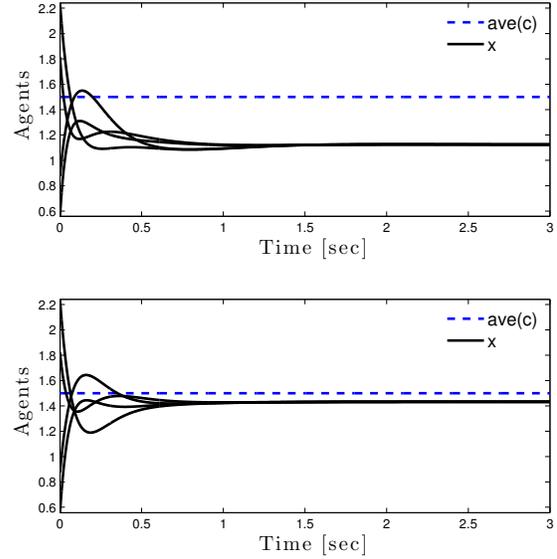} \vspace{-0.4cm}
\caption{Response of the networked multiagent system in Figure \ref{illustrative:01} with identical value of information (top) and heterogeneous value of information (bottom) (dashed lines denote the actual average of the target quantities and solid lines denote the agent states).} \vspace{-0.35cm}
\label{illustrative:02}
\end{figure}

In this section, the efficacy of the proposed distributed sensing algorithm given by (\ref{eq:ap_weight_1}) and (\ref{eq:ap_weight_2}) is illustrated using two examples.
For the first example, consider the active-passive networked multiagent system given in Figure \ref{illustrative:01}. 
Let the value of information and the exogenous inputs be constant, and the accuracy of the sensed target quantity decreases (resp., increases) as the distance between a node and the target gets larger (resp., smaller). 
In addition, let nodes 1 and 2 be sense targets 1 and 2 with perfect accuracy, respectively,  
and sense targets 2 and 1 with $50\%$ accuracy, respectively. 
For this purpose, we set $c_1=1$ (node 1 measure of target 1 with perfect accuracy), $c_2=1$ (node 1 measure of target 2 with $50\%$ accuracy), $c_3=0.5$ (node 2 measure of target 1 with $50\%$ accuracy), $c_4=2$ (node 2 measure of target 2 with perfect accuracy). 
Figure \ref{illustrative:02} presents the results with $\alpha=5$, $\gamma=10$, and $\sigma=0$ in (\ref{eq:ap_weight_1}) and (\ref{eq:ap_weight_2}). 
Specifically, the top figure shows the network response with identical value of information (i.e., we set $w_{11}=w_{12}=w_{23}=w_{24}=1$) and the bottom figure shows the same response with heterogeneous value of information (i.e., we set $w_{11}=1$, $w_{12}=0.1$, $w_{23}=0.1$, and $w_{24}=1$). 
As expected, utilizing heterogeneity in the value of information allows network to converge a close neighborhood of the actual average of the target quantities sensed by agents. 

For the second example, we consider a networked multiagent system tracking a moving target shown in Figure \ref{fig:dynamic_map}. 
Each agent has a sensing radius, where the value of information obtained by the agents decrease as the target moves away from them. 
In this study, we set $\alpha=20$, $\gamma=150$, and $\sigma=0.1$ in (\ref{eq:ap_weight_1}) and (\ref{eq:ap_weight_2}). 
Figure \ref{fig:dynamic_rec} shows the network response both with identical value of information and with heterogeneous value of information. 
Once again, utilizing heterogeneity in the value of information allows the network to sense the actual trajectory with improved tracking accuracy. 

\begin{figure}[t!] \hspace{-0cm} \includegraphics[scale=0.44]{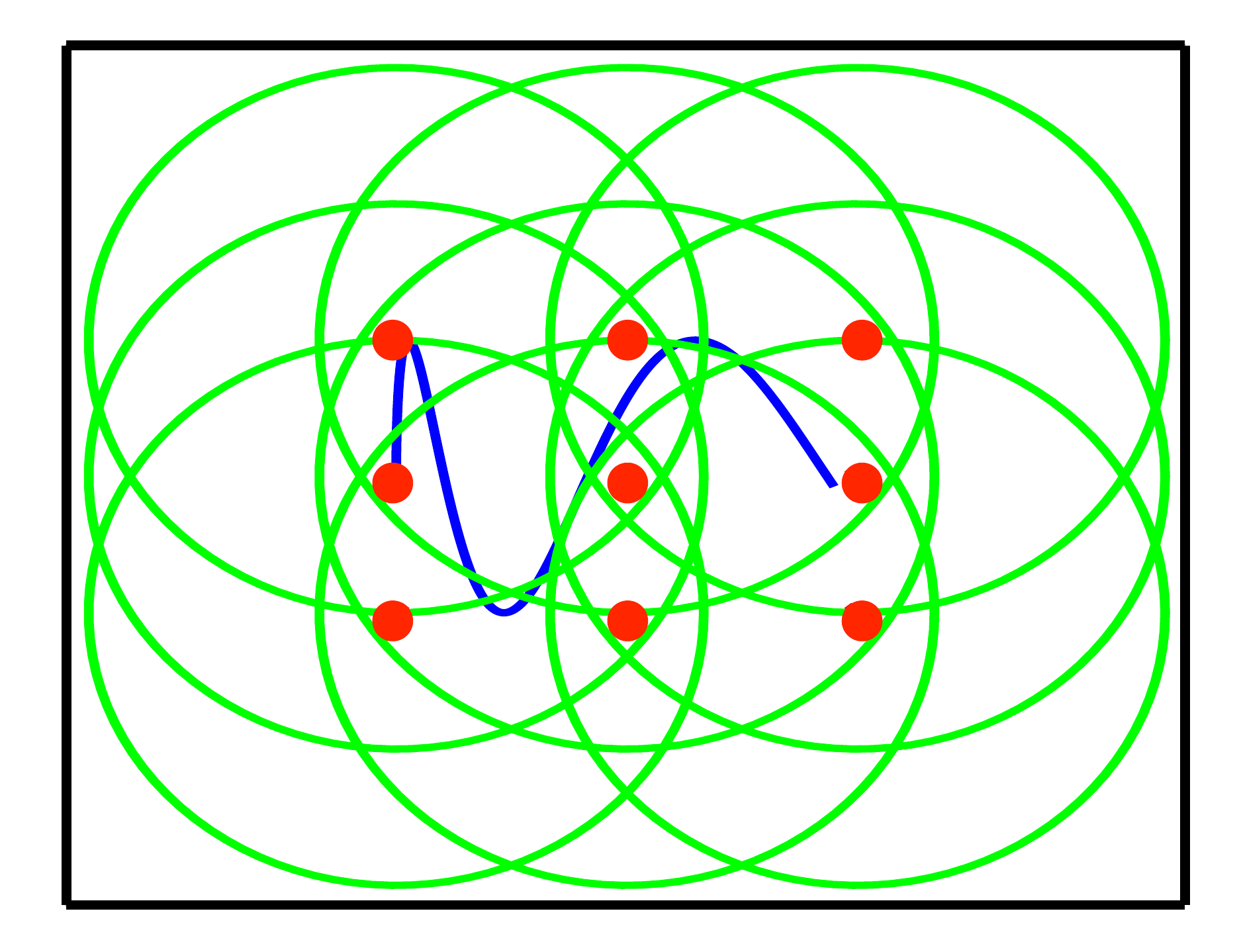} \vspace{-0.5cm}
\caption{Active-passive networked multiagent system with one non-stationary target and nine fixed agents (dots denote the agents, circles denote the sensing radius of agents, and solid line denote the actual target trajectory).} \vspace{-0.35cm}
\label{fig:dynamic_map}
\end{figure}

\begin{figure}[t!] \hspace{-0cm} \includegraphics[scale=0.44]{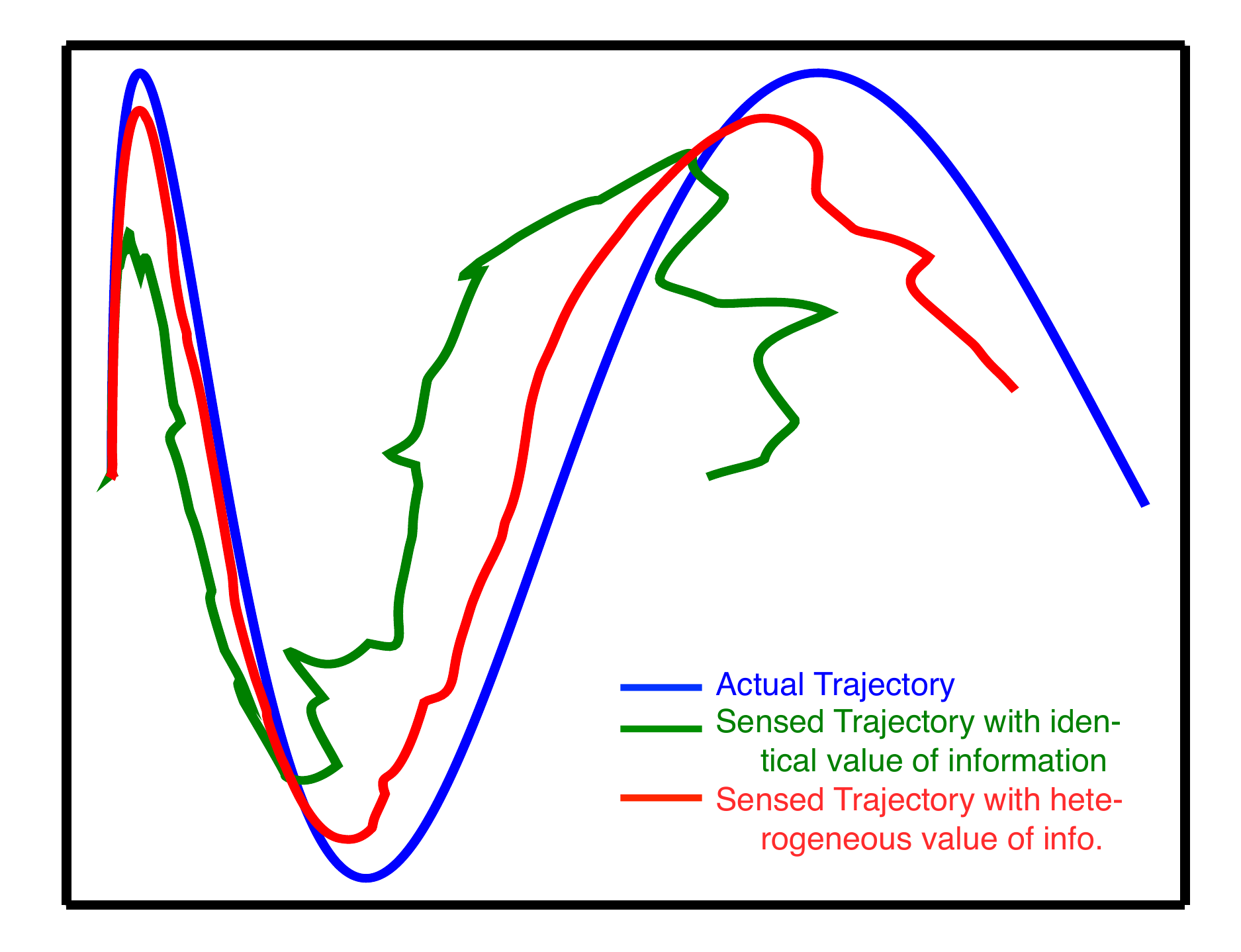} \vspace{-0.5cm}
\caption{Response of the networked multiagent system in Figure \ref{fig:dynamic_map} with identical value of information and heterogeneous value of information.} \vspace{-0.35cm}
\label{fig:dynamic_rec}
\end{figure}


\section{Conclusion}

We utilized the active-passive networked multiagent systems approach to develop a distributed sensing framework that accounts for the heterogeneity in agents' sensing capability measured by the value of information. 
It is shown that the states of all agents converge to an adjustable neighborhood of the weighted average of the sensed exogenous inputs by the active agents when there exists time-variation in both agents' sensing capability and the sensed exogenous inputs. 
In addition, we discussed several cases when agents' sensing capability and the exogenous inputs are time-invariant, which yields asymptotic stability of the error dynamics between the states of all agents and the weighted average of the sensed exogenous inputs. 
Illustrative examples indicated that utilizing heterogeneity allows the networked multiagent system to achieve better distributed sensing performance. \\

\bibliography{DanielPetersonBib,TanselYucelenBib} 

\end{document}